\newcommand{\R}{\mathbb{R}}
\newtheorem{theorem}{Theorem}[section]
\newtheorem{lemma}[theorem]{Lemma}
\newtheorem{proposition}[theorem]{Proposition}
\newtheorem{remark}[theorem]{Remark}
\newtheorem{question}[theorem]{Question}
\newtheorem{conjecture}{Conjecture}
\newtheorem*{remark*}{Remark}
\numberwithin{equation}{section}
\numberwithin{figure}{section}
\renewcommand{\div}{\mathrm{div}} 
\def\intave#1{\int_{#1}\hbox{\llap{$\raise2.3pt\hbox{\vrule
				height.9pt width7pt}\phantom{\scriptstyle{#1}}\mkern-2mu$}}}
\begin{document}

  \title{On location of maximum of gradient of torsion function}

\author[Q. Li]{Qinfeng Li}
\address{Qinfeng Li, School of Mathematics, Hunan University, Changsha, Hunan, China}
\email{liqinfeng1989@gmail.com}

\author[R. Yao]{Ruofei Yao}
\address{Ruofei Yao, School of Mathematics, South China University of Technology, Guangzhou, P.R. China.}
\email{yaoruofei@scut.edu.cn}

\thanks{Research of Qinfeng Li was supported by the National Science Fund of China for Youth Scholars (No. 12101215). Research of Ruofei yao was supported by the National Natural Science Fund of China for Youth Scholars (No. 12001543).}

\maketitle

\begin{abstract}
It has been a widely belief that for a planar convex domain with two coordinate axes of symmetry, the location of maximal norm of gradient of torsion function is either linked to contact points of largest inscribed circle or connected to points on boundary of minimal curvature. However, we show that this is not quite true in general. Actually, we derive the precise formula for the location of maximal norm of gradient of torsion function on nearly ball domains in $\mathbb{R}^n$, which displays nonlocal nature and thus does not inherently establish a connection to the aforementioned two types of points. Consequently, explicit counterexamples can be straightforwardly constructed to illustrate this deviation from conventional understanding. We also prove that for a rectangular domain, the maximum of the norm of gradient of torsion function exactly occurs at the centers of the faces of largest $(n-1)$-volume.
\end{abstract}
	
	\vskip0.2in

\section{Introduction}
\subsection{Background and Motivations}
Let $u=u_\Omega$ be the torsion function on a convex domain $\Omega \subset \mathbb{R}^n$. That is,
\begin{align}
\label{torsionfunction}
    \begin{cases}
        -\Delta u=1 \quad &\mbox{in $\Omega$,}\\
        u=0\quad &\mbox{on $\partial \Omega$.}
    \end{cases}
\end{align}
We are interested in finding the location of points on $\partial \Omega$ at which 
$|\nabla u|=-\partial u/\partial \nu$
achieves its maximum where $\nu$ is the unit outer normal vector on $\partial\Omega$. There are mainly two motivations.

First, the solution to $\eqref{torsionfunction}$ can be viewed as the steady state temperature in the thermal body $\Omega$, where the heat source is uniformly distributed in $\Omega$ with a unit value, the heat transfer occurs solely through conduction, and the external temperature is held at a constant value, specifically $0$. Then the quantity $\max_{\partial \Omega}|\nabla u|$ represents the maximal dispersion of temperature, and the location where the maximum is attained is thus called the \textit{points of maximal dispersion}. Finding the location of maximal dispersion is itself of interest. Moreover, in the very inspiring article \cite{Buttazzo}, the author introduces a thin thermal insulation problem for the purpose of finding the optimal distribution of insulating material in order to maximize the averaged heat of a given domain, and such problem recently gained renewed interest, as seen in the more recent articles \cite{BBN}, \cite{BBN1}, \cite{DNST}, \cite{HLL1} and \cite{HLL2}. In the insulation problem model, when the heat source is uniform, it is shown in \cite{HLL2} that for any domain which is not a ball, when the total amount of material is smaller than a threshold, then the insulation material is better not to cover the whole boundary. Moreover, an earlier result in \cite{ER03} shows that if the total amount of material goes to $0$, then the optimal distribution of material actually concentrates on points of maximal dispersion of solution to \eqref{torsionfunction}. Hence finding the points of maximal gradient on boundary can help design the optimal strategy for distributing insulation material. Unfortunately, \cite{ER03} does not delve into geometric feature of points of maximal dispersion, and later in \cite{BBN1}, the location of maximal dispersion is mentioned for annulus and squares, see also \cite{BBN}. 


Actually the points of maximal dispersion are exactly points of maximal stress in the torsion problem, and  finding out the location of such points is of significant physical interest and has been drawn attention much earlier, tracing back to Saint Venant's elasticity theory, see \cite{SV}, and this serves as our second motivation. The name torsion function of the solution to \eqref{torsionfunction} actually comes from elasticity theory, and $\Omega$ usually represents a planar convex domain which is the cross-section of the elastic bar. Roughly speaking, if $|\nabla u|$ becomes too large, then the material loses its elastic behaviour and becomes plastic. Hence the points at which $|\nabla u|$ achieves its maximum are also called \textit{fail points} or \textit{points of maximal stress}. For historical remarks on location of fail points, we refer to \cite{K87} and \cite{HS21} and references therein, see also the survey article \cite{KM93}.

Clearly, the maximal stress must occur on the boundary, due to maximum principle for subharmonic functions. It is in general difficult by theoretical method to decide where exactly on boundary the stress is maximal. Saint Venant conjectured that for a convex planar domain symmetric about two coordinate axes (we call this \textit{axially symmetric} later for convenience), the maximal stress occurs on the contact points of the largest inscribed circle.  This conjecture is not true in general, due to \cite{R90} and \cite{Sweer} via contradiction argument: either $\Omega$ or some super-level set of torsion function can serve as a counterexample. See also the counterexamples surveyed in \cite{Sweer2}.  In particular, a direct concrete counterexample was constructed in \cite{R902}, and there the author proposed the following modified conjecture which has been believed by many mechanical engineers: 
\begin{conjecture}
\label{rc}
For an axially symmetric planar convex domain centered at the origin, the fail points must either on the point of minimal distance to the origin, or points with minimal curvature.
\end{conjecture}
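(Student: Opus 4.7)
The plan is to disprove Conjecture~\ref{rc} by perturbing from a ball in $\R^n$, where the torsion function admits an explicit closed form. On $B_1$ the torsion function is $u_0(x)=(1-|x|^2)/(2n)$ with $|\nabla u_0|\equiv 1/n$ on $\partial B_1$, so the ball is a degenerate case with constant normal derivative. This suggests studying nearly spherical domains of the form
\[
\Omega_\ep=\left\{r\theta:\;0\le r<1+\ep\phi(\theta),\ \theta\in\Sph^{n-1}\right\},
\]
where $\phi\in C^{\infty}(\Sph^{n-1})$ is small enough, real-valued, and can be chosen so that $\Omega_\ep$ is convex and axially symmetric. Since the sets of ``contact points of the largest inscribed circle'' and ``points of minimal curvature'' are generically finite, while the location of the maximum of $|\nabla u_\ep|$ will depend on $\phi$ through a \emph{nonlocal} functional to be identified, I expect generic admissible $\phi$ to violate the conjecture.

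First I would write $u_\ep=u_0+\ep v+O(\ep^2)$ and derive the linearised problem for $v$. A short calculation using $u_0(\theta(1+\ep\phi))=-\ep\phi(\theta)/n+O(\ep^2)$ on $\partial\Omega_\ep$ gives
\[
\begin{cases}
-\Delta v=0 & \text{in }B_1,\\[2pt]
v=\phi/n & \text{on }\partial B_1,
\end{cases}
\]
so $v$ is the harmonic extension of $\phi/n$, represented by a Poisson integral, which is a manifestly nonlocal object. Next I would expand $|\nabla u_\ep|$ along $\partial\Omega_\ep$, parametrised by $\theta\in\Sph^{n-1}$. Differentiating the boundary condition along $\partial\Omega_\ep$ and combining with $\nabla u_0(\theta)=-\theta/n$ on $\partial B_1$, one obtains, to first order in $\ep$,
\[
|\nabla u_\ep|\bigl(\theta(1+\ep\phi(\theta))\bigr)=\tfrac{1}{n}+\ep\,\Psi[\phi](\theta)+O(\ep^2),
\]
where $\Psi[\phi]$ is a linear operator combining the normal derivative at $\theta$ of the harmonic extension of $\phi/n$ with a local contribution coming from $\phi(\theta)$ itself. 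The key observation is that $\arg\max_{\Sph^{n-1}}\Psi[\phi]$ determines the location of maximal dispersion for small $\ep$, and it does so through integration against all of $\phi$, not merely pointwise data of $\phi$ and its derivatives at $\theta$.

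The main obstacle is twofold: identifying $\Psi[\phi]$ cleanly as a nonlocal operator (a careful bookkeeping of the Hadamard-type shape derivative together with a Taylor expansion of the unit outward normal), and exhibiting an axially symmetric, convex $\phi$ for which $\arg\max\Psi[\phi]$ is neither a contact point of the largest inscribed ball of $\Omega_\ep$ nor a point of minimal curvature of $\partial\Omega_\ep$. For the second task I would specialise to $\R^{2}$, choose $\phi(\theta)$ as a short trigonometric polynomial in the polar angle that respects the $\Z_2\times\Z_2$ symmetry, and compute everything on Fourier modes, where the Poisson extension diagonalises and $\Psi$ acts as a Fourier multiplier up to a local correction. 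Combining two or three $\cos(2k\theta)$ modes with incommensurate coefficients produces three distinct candidate locations for inradius contact, minimal curvature, and the maximiser of $\Psi[\phi]$, which yields the desired counterexample. Passing from this linearised construction back to Conjecture~\ref{rc} is then an easy continuity argument in $\ep$.
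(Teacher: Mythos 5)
Your proposal is correct and follows essentially the same strategy as the paper: perturb from the unit ball, compute the first-order shape derivative of $|\nabla u|$ on the boundary, observe that the result is nonlocal because it involves the normal derivative of the harmonic extension of the normal perturbation, and specialize to planar trigonometric-polynomial perturbations so that the inradius contact point, the minimal-curvature point, and the maximizer of the first-order term are forced apart. The paper's explicit version of your $\Psi[\phi]$ is $\tfrac{1}{n}\bigl(\phi-\partial_\nu T\phi\bigr)$ (Lemma~\ref{crucial}), its concrete counterexample is $\zeta(\theta)=-4\cos(2\theta)+\cos(4\theta)$ (Proposition~\ref{faraway}), and your ``easy continuity argument in $\ep$'' is formalized in Theorem~\ref{location} via the implicit function theorem under a uniqueness and non-degeneracy hypothesis on the maximizer of $\mathcal{F}$.
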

This conjecture is proved for the particular domain studied in \cite{R902}, but whether or not it is true in general is open, to the best of our knowledge.

As mentioned above, due to \cite{Sweer} and \cite{R90}, even for an axially symmetric convex domain, a definite correlation does not exist between the location of fail points and the positions on the boundary that are closest to the origin. It can also be imagined there should be no strict relation between location of fail points and points on boundary with minimal curvature. While there might not be an explicit reference addressing this matter, the reason is simple: considering a very narrow domain of two axes of symmetry, the left and right portions of boundary are flat, while the upper and lower portions of boundary are strictly concave. One can imagine (and can rigorously prove by asymptotic analysis) that when the domain is sufficiently narrow, the location of maximal gradient of torsion would be situated near the middle points in the upper and lower portions of boundary, while the curvature there is not zero and therefore not minimal. 

However, so far all counterexamples have only contradicted one scenario at a time, without effectively refuting both scenarios concurrently. Intuitively, fixing an axially symmetric convex domain, one can hardly say that fail points can neither occur on points of minimal curvature, nor occur on contact points of largest inscribed ball. Proving or disproving Conjecture \ref{rc} requires more precise analysis of the behavior of the gradient of torsion function on boundary, which is rather difficult for generic domains.

In fact, it is a widely held belief in elasticity theory that for any planar convex domain, the location of fail points should be near the contact points of largest inscribed disk, and of these at the one with minimal curvature, see for example the torsion chapter in \cite{Flugge}. However, this statement is vague and does not give information on how close fail points and contact points can be.

Some people believe that a stronger result holds, as stated in Conjecture \ref{rc}. In this direction, the most important result so far is due to \cite{K87}, which gives a positive answer under the additional assumption that the curvature of the boundary is decreasing in the first quadrant. In fact, under this condition, it is proved that $|\nabla u|$ is increasing along the boundary, and thus fail points lie exactly on end points of shorter axis, which are also contact points of largest inscribed circle and points of minimal curvature. A standard model in such case is elliptic domain. There is another very interesting article \cite{K98}, where the author proves that for any planar convex domain, the points of maximum curvature can never be fail points.

The monotonicity assumption of curvature in \cite{K87} is essential in applying moving plane method, which is the crucial technique of proving monotonicity of $|\nabla u|$. The moving plane method has been a very powerful tool to prove monotonicity of some functions related to PDEs with Dirichlet conditions, and thus the following question is also natural:

\begin{question}
    \label{q2}
Let $\Omega$ be an axially symmetric domain and $u$ be the torsion function associated to $\Omega$. If $|\nabla u|$ is increasing along $\partial \Omega$ in the first quadrant, is it true that the curvature of $\partial \Omega$ in the first quadrant must be decreasing?
\end{question}

Also, besides regular polygons and axially symmetric planar convex domains satisfying the monotonicity assumption of curvature in the first quadrant, we are not aware of any theoretical results proving the precise location of fail points in other domains, even absence for rectangular domains in literature.

Conjecture \ref{rc}, Question \ref{q2} and our curiosity of finding maximal heat dispersion, or maximal stress of material for other domains, motivate this present work.

\subsection{Our Results}

We disprove Conjecture \ref{rc} and also give a negative answer to Question \ref{q2}, both with explicit examples. 

Actually, instead of proving some specified points are not fail points, we can give a precise description of location of fail points for nearly ball domains, from which we demonstrate the following principle: given an axially symmetric convex domain in any dimension, both the location of contact points of largest inscribed ball and the distribution of curvature (in various sense) on the boundary, together do not in general relate to the location of fail points without additional assumptions.

Our approach is by perturbation argument which works in any dimension. Let
\begin{align}
\label{nearlyball}
    \Omega_t=\{(r,\sigma): r<1+t\zeta(\sigma)\},
\end{align}where $\sigma \in \mathbb{S}^{n-1}$ and $\zeta$ is a smooth function defined on $\mathbb{S}^{n-1}$. Hence $\Omega_t=F_t(B_1)$, where $B_1$ is the unit ball, and $F_t$ is some smooth diffeomorphism from $B_1$ to $\Omega_t$. The main ingredient is the establishment of the following pointwise formula for gradient of torsion on boundary:
\begin{align}
    \label{keyexpressionintro}
    |\nabla u(t)(y)|^2=\frac{1}{n^2}+\frac{2}{n^2}\Big(\zeta(\sigma)-\partial_\nu T\zeta(\sigma)\Big)t+O(t^2),
\end{align}where $u(t)$ is the torsion function on $\Omega_t$, $y=F_t(\sigma)\in \partial \Omega_t$, $\nu$ is the unit outer normal to $\partial B_1$, and $T\zeta$ is the harmonic extension of $\zeta$ into $B_1$. See Lemma \ref{crucial} for a proof. See also the remarks after Lemma \ref{crucial} for the validity of \eqref{keyexpressionintro} from other perspectives.

The pointwise formula \eqref{keyexpressionintro} is itself interesting, as one can see also the remarks after Lemma \ref{crucial} for the validity of \eqref{keyexpressionintro} from other perspectives. Going back to our consideration of maximal gradient of torsion function $u$, consequently, for $t>0$ small and domain given by  \eqref{nearlyball}, the location where $|\nabla u|$ takes its maximum, in polar coordinates, should be very close to the maximal point of the quantity \begin{align}
    \label{keyquantity}
\zeta-\partial_\nu T\zeta    
\end{align}
on $\partial B_1$. For a rigorous statement, see Theorem \ref{location}. However, neither the distance function nor curvature in polar coordinates is described by \eqref{keyquantity} in general. Actually, due to the term $\partial_\nu T\zeta$ which can be viewed as half Laplacian on $\zeta$, the location of maximal point of \eqref{keyquantity} is nonlocal and thus depends on global geometric property of domain, while both the location of minimal curvature and contact points of largest inscribed ball are local properties of domain without further global assumptions such as that in \cite{K87}. Hence we justify the principle mentioned above.

As a consequence, we prove the following with explicit examples (see Proposition \ref{faraway} and Proposition \eqref{monotonecounterexample}):
\begin{enumerate}
       \item There exists axially symmetric planar convex domains such that the points of maximal gradient of torsion function, the points of minimal curvature and the contact points of largest inscribed disk, are all different and even ``not close" to  each other.
    \item There exists axially symmetric planar convex domains such that along the boundary in the first quadrant, the norm of gradient of the torsion function is monotone while the curvature is not monotone.
\end{enumerate}

These disprove Conjecture \ref{rc} and answer Question \ref{q2}, and also suggest that the commonly held belief in elasticity theory mentioned before might not be quite true in general.

Besides the above results, we also exactly determine location of maximal norm of gradient of torsion function on rectangular domains, see Theorem \ref{rectangle}. As far as we are aware, this result is new. 

Last, we should mention that our understanding of location of maximal gradient of torsion function is still quite modest. For example we do not know the necessary and sufficient condition for an axially symmetric planar convex domain such that the gradient of torsion function is monotone in the first quadrant. We only somehow give the answer when the domain is nearly a ball. The complete solution seems open. We also propose some other questions which are open to us in section 6.

\section{Torsion function near a ball}
Let $\Omega_0=B_1$, the unit ball in $\mathbb{R}^n$. Consider the smooth transformation map
\begin{align}
    \label{diff}
F_t(x)=x+t\eta(x)+O(t^2),    
\end{align}where $\eta: \mathbb{R}^n \mapsto \mathbb{R}^n$ is a smooth vector field compactly supported in $\mathbb{R}^n$, and hence $F_t$ is a diffeomorphism for $|t|$ small. There are usually two typical choices. The first choice is simply $F_t(x)=x+t\eta(x)$. The second choice is regarding $F_t(x)=F(t,x)$, the flow map generated by $\eta$:
\begin{align*}
    \begin{cases}
    \frac{\partial}{\partial t}F(t,x)=\eta\circ F(t,x)\quad &x\in \mathbb{R}^n\\
    F(0,x)=x\quad & x\in \mathbb{R}^n
    \end{cases}
\end{align*}
By standard ODE theory, $F_t$ is also a smooth diffeomorphism satisfying \eqref{diff}.

Let $\Omega_t=F_t(B_1)$ and  $u(t)=u_{\Omega_t}$ be the torsion function in $\Omega_t$, that is
\begin{align*}
    \begin{cases}
        -\Delta u(t)=1 \quad &\mbox{in $\Omega_t$}\\
        u(t)=0\quad &\mbox{on $\partial \Omega_t$}
    \end{cases}
\end{align*}
We have the following lemma.
\begin{lemma}
    \label{crucial}
Let $\Omega_t, u(t)$ and $\eta$ be as above. Then $u(t)$ depends smoothly on $t$. Moreover, for $x \in B_1$ and $y=F_t(x)$, we have
\begin{align}
    \label{pointwisegrad}
|\nabla u(t)(y)|^2=\frac{|x|^2}{n^2}+\frac{2}{n^2}\Big(\eta \cdot x-x\cdot \nabla T\zeta\Big)t+O(t^2),
\end{align}where $\zeta=\eta \cdot \nu$ is defined on $\partial B_1$ and $T\zeta$ is the harmonic extension of $\zeta$ into $B_1$. 

In particular if $\sigma \in \partial B_1$, then we have
\begin{align}
\label{keyexpression}
    |\nabla u(t)(y)|^2=\frac{1}{n^2}+\frac{2}{n^2}\Big(\zeta(\sigma)-\partial_\nu T\zeta(\sigma)\Big)t+O(t^2),
\end{align}where $\nu$ is the unit outer normal to $\partial B_1$.
\end{lemma}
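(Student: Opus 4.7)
The plan is to proceed in three steps: pull back to the fixed domain $B_1$ to establish smoothness in $t$, identify the first-order shape derivative of $u(t)$ via a Hadamard-type argument, and then Taylor expand $\nabla u(t)(y)$ using the chain rule under the change of variables $y=F_t(x)$.

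First, set $v(t)(x):=u(t)(F_t(x))$ for $x\in B_1$. A change of variables in the weak formulation of \eqref{torsionfunction} on $\Omega_t$ shows that $v(t)$ solves
\[
-\mathrm{div}\bigl(A(t)\nabla v(t)\bigr)=\det(DF_t)\quad\text{in }B_1,\qquad v(t)=0\quad\text{on }\partial B_1,
\]
with $A(t):=\det(DF_t)\,DF_t^{-1}DF_t^{-T}$. Since $F_t$ is smooth in $t$, both $A(t)$ and the right-hand side are smooth in $t$, and $A(0)=I$, so the equation is uniformly elliptic for $|t|$ small. Schauder estimates combined with the implicit function theorem on Banach spaces then give that $t\mapsto v(t)$ is smooth into $C^{2,\alpha}(\overline{B_1})$, which in particular proves the asserted smooth dependence of $u(t)$ on $t$.

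Next, I would identify the Eulerian shape derivative $u'(x):=\frac{d}{dt}\big|_{t=0}u(t)(x)$ on $B_1$. Interior differentiation of $-\Delta u(t)=1$ yields $\Delta u'=0$ in $B_1$, while differentiating the boundary identity $u(t)(F_t(\sigma))=0$ for $\sigma\in\partial B_1$ at $t=0$ gives the Hadamard-type relation $u'(\sigma)+\eta(\sigma)\cdot\nabla u(0)(\sigma)=0$. Since $u(0)(x)=(1-|x|^2)/(2n)$, one has $\nabla u(0)(x)=-x/n$ and in particular $\partial_\nu u(0)=-1/n$ on $\partial B_1$; hence $u'=\zeta/n$ on $\partial B_1$, which by uniqueness of the harmonic extension forces $u'=T\zeta/n$ throughout $B_1$.

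Finally, writing $y=F_t(x)$ and applying the chain rule gives $\nabla u(t)(y)=DF_t(x)^{-T}\nabla v(t)(x)$. Expanding $DF_t(x)^{-T}=I-t\,(D\eta(x))^T+O(t^2)$ and using the material derivative relation $\dot v=u'+\eta\cdot\nabla u(0)=T\zeta/n-(\eta\cdot x)/n$ together with $\nabla(\eta\cdot x)=(D\eta)^T x+\eta$, the two contributions involving $(D\eta)^T x$ cancel, producing
\[
\nabla u(t)(y)=-\frac{x}{n}+\frac{t}{n}\bigl(\nabla T\zeta(x)-\eta(x)\bigr)+O(t^2).
\]
Squaring this identity yields \eqref{pointwisegrad}, and the boundary case \eqref{keyexpression} follows by specializing to $x=\sigma\in\partial B_1$ with $\eta\cdot\nu=\zeta$ and $\nu\cdot\nabla T\zeta=\partial_\nu T\zeta$. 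The main subtlety is the cancellation of the $(D\eta)^T x$ terms: it is precisely the gap between the material derivative (which compares values at corresponding reference points) and the Eulerian derivative (which governs the harmonic boundary problem for $u'$) that causes the expansion to collapse to a clean formula involving only $\eta$ and $T\zeta$ rather than the full Jacobian $D\eta$.
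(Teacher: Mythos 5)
Your proof is correct, and the route is genuinely different from the paper's. The paper works purely with the material (Lagrangian) derivative: it pulls the PDE back to $B_1$, expands the transformed operator to first order in $t$, obtains the PDE $\Delta v + \frac{1}{n}\Delta(\eta\cdot x)=0$ for the linearization $v$ of $\tilde w=u(t)\circ F_t$, and solves it to get $v=\frac{1}{n}(T\zeta-\eta\cdot x)$; the fact that the Eulerian shape derivative $u'=v-\eta\cdot\nabla u(0)$ is harmonic with boundary trace $\zeta/n$ is never stated explicitly, only implicitly encoded in the algebra. You instead invoke the Hadamard shape-calculus framework up front, identifying $u'$ directly as the harmonic extension $T\zeta/n$ from the boundary identity $u'+\eta\cdot\nabla u(0)=0$ on $\partial B_1$, and then recovering $\dot v$ as $u'+\eta\cdot\nabla u(0)$ on all of $B_1$. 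Both arrive at the same linearization and the same cancellation of the $(D\eta)^Tx$ terms in the chain rule. Your approach is more conceptual and makes transparent \emph{why} only $\zeta=\eta\cdot\nu$ matters (it is the only datum entering the harmonic Dirichlet problem for $u'$); the paper's approach is more self-contained, requiring only elliptic regularity and the implicit function theorem on the pulled-back equation, without presupposing the Hadamard variational formula. One small point worth stating explicitly if you wrote this up fully: the Eulerian derivative $u'(x)$ is a priori defined only for $x$ in the interior of $B_1$ (where $x\in\Omega_t$ for $|t|$ small), and its boundary trace entering the Hadamard relation should be justified via the smoothness of $v(t)$ in $C^{2,\alpha}(\overline{B_1})$ that you established in the first step.
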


\begin{proof}
Let $\Tilde{w}=u(t)\circ F_t$, which is thus defined from $B_1$ to $\mathbb{R}$. Note that
\begin{align}
\label{inv}
   x= F_t ^{-1}(y)=y-t\eta(y)+O(t^2).
\end{align}
By \eqref{diff} and \eqref{inv}, and letting $\delta_{ij}$ be the Kronecker symbol, we have
\begin{align}
\label{t1}
    \frac{\partial x^k}{\partial y^i}(y)=\delta_{ik}-t\eta^k_i(y)+O(t^2)=\delta_{ik}-t\eta^k_i(x)+O(t^2).
\end{align}
Also, 
\begin{align}
\label{t2}
    \frac{\partial^2 x^k}{\partial (y^i)^2}(y)=-t\eta^k_{ii}(y)+O(t^2)=-t\eta^k_{ii}(x)+O(t^2).
\end{align}
In the following, repeated indexes means summation over the indexes, and we write $F_t^{-1}$ as $F_{-t}$. Hence
\begin{align*}
    \left(\Tilde{w}_k(F_{-t}(y)) \frac{\partial x^k}{\partial y^i}(y)\right)_i=\Delta u(t)(y).
\end{align*}
Hence
\begin{align}
\label{t3}
    \begin{cases}
        (\Tilde{w}_{kl}\circ F_{-t})\frac{\partial x^l}{\partial y^i}\frac{\partial x^k}{\partial y^i}+(\Tilde{w}_k \circ F_{-t})\frac{\partial x^k}{\partial (y^i)^2}=-1 \quad &\mbox{in $\Omega_t$}\\
        w=0\quad &\mbox{on $\partial \Omega_t$}
    \end{cases}
\end{align}
From \eqref{t1}-\eqref{t2}, 
\begin{align*}
    \frac{\partial x^k}{\partial y^i}(y)\frac{\partial x^l}{\partial y^i}(y)=\delta_{kl}-t\eta^l_k(x)-t\eta^k_l(x)+O(t^2),
\end{align*}and thus \eqref{t3} becomes
\begin{align}
\label{tildew}
    \begin{cases}
        \Delta \Tilde{w}-t(2\eta^k_l\Tilde{w}_{kl}+\Delta \eta^k\Tilde{w}_k)+O(t^2)=-1\quad &\mbox{in $B_1$}\\
        w=0\quad &\mbox{on $\partial B_1$}
    \end{cases}
\end{align}
By standard elliptic regularity theory and implicit function theorem, $\Tilde{w}(x,t): B_1\times (-1,1)\mapsto \mathbb{R}$ is a smooth function, and hence $u(t)$ is smoothly depending on $t$. We write 
\begin{align}
\label{expandw}
    \Tilde{w}(x,t)=w(x)+tv(x)+O(t^2),
\end{align}and thus from \eqref{tildew}, $v$ satisfies
\begin{align}
    \label{equationforv'}
\begin{cases}
    \Delta v-2\eta^k_l w_{kl}-\Delta \eta^k w_k=0\quad &\mbox{in $B_1$}\\
    v=0\quad &\mbox{on $\partial B_1$}
\end{cases}
\end{align}
Note that $w$ is exactly the torsion function in $B_1$, and thus 
\begin{align*}
    w(x)=\frac{1-|x|^2}{2n}.
\end{align*}
Plugging this into \eqref{equationforv'}, we eventually obtain
\begin{align}
    \label{vequation}
\begin{cases}
    \Delta v+\frac{2}{n}\div \eta+\frac{x\cdot \Delta \eta}{n}=0\quad &\mbox{in $B_1$}\\
    v=0\quad &\mbox{on $\partial B_1$}
\end{cases}
\end{align}
In the following, the derivatives are respect to the $x$-variable. Note that
\begin{align*}
    x\cdot \Delta \eta =&(x^k\eta^k_i)_i-x^k_i\eta^k_i\\
    =& \left((x^k\eta^k)_i-x^k_i\eta^k\right)_i-\delta_{ik}\eta^k_i\\
    =& \Delta (\eta \cdot x)-2\div \eta
\end{align*}
Hence \eqref{vequation} becomes
\begin{align}
    \label{vequation2}
\begin{cases}
    \Delta v+\frac{1}{n}\Delta (\eta \cdot x)=0\quad &\mbox{in $B_1$}\\
    v=0\quad &\mbox{on $\partial B_1$}
\end{cases}
\end{align}
Hence 
\begin{align}
    \label{vexpression}
v=\frac{1}{n}(T\zeta-\eta\cdot x),    
\end{align}
where $\zeta$ is the restriction of $\eta \cdot x$ on $\partial B_1$ and $T\zeta$ is the harmonic extension of $\zeta$ to $B_1$. 

Now we evaluate $|\nabla u(t)|^2$ at  $y \in \Omega_t$. First, we have
\begin{align}
\label{nabla1}
    |\nabla u(t)|^2=|\nabla (\Tilde{w}\circ F_{-t})|^2=\sum_j(\sum_i \Tilde{w}_i\circ F_{-t} \frac{\partial x_i}{\partial y^j})^2
\end{align}
In view of \eqref{t1} and \eqref{expandw}, we have from \eqref{nabla1} that for $y=F_t(x)$ with $x \in B_1$, 
\begin{align}
    \label{nabla2}
|\nabla u(t) (y)|^2=|\nabla w|^2(x)+t\left(2w_j(x)v_j(x)-2w_i(x)w_j(x)\eta^i_j(x)\right)+O(t^2).    
\end{align}
By \eqref{vexpression}, we eventually have
\begin{align*}
    |\nabla u(t)|^2(y)=&|\nabla w|^2(x)+t\Big(-\frac{2}{n^2}\left(x\cdot \nabla T\zeta-(\eta^ix^i)_jx^j\right)-\frac{2}{n^2}\eta^i_jx^ix^j\Big)+O(t^2)\\
    =&\frac{|x|^2}{n^2}+t\frac{2}{n^2}(\eta \cdot x-x \cdot \nabla T\zeta)+O(t^2).
\end{align*}
Hence for $\sigma \in \partial B_1$, and thus for $y=F_t(\sigma)\in \partial \Omega_t$, 
\begin{align*}
    |\nabla u(t)(y)|^2=\frac{1}{n^2}+t\frac{2}{n^2}\Big(\zeta(\sigma)-\partial_\nu T\zeta\sigma\Big)+O(t^2),
\end{align*}where $\nu$ is the unit outer normal to $\partial B_1$.
\end{proof}

We give some remarks which justify \eqref{keyexpression} from different perspectives. 

First, one can readily see from \eqref{keyexpression} that the first order term of $|\nabla u(t)|^2$ depends only on the normal component of the vector field $\eta$ along $\partial B_1$. For example, if $F_t$ is a translation, then $\eta$ is a constant vector field, and thus $\zeta(x)$ is linear combinations of coordinate functions. In such case, $\zeta-\partial_\nu T\zeta\equiv 0$ on $\partial B_1$. This coincides with the fact that the translation of domain does not affect the distribution of the torsion function. 

Second, if $F_t$ is volume preserving, then \begin{align}
\label{int0}
    \int_{\partial B_1} \zeta \, d\sigma=0.
\end{align}In view that $T\zeta$ is harmonic, we have
\begin{align*}
    \int_{\partial B_1}(\zeta-\partial_\nu T\zeta)\, d\sigma=0.
\end{align*}
Hence $\zeta-T_\nu\zeta$ has strictly positive and strictly negative part on $\partial B_1$, unless $\zeta$ is an eigenfunction of first nonzero Stekl\"off eivenvalue, which must be coordinate functions and thus $F_t$ then becomes a translation map.  Now let $f(t)=\sup_{x \in \partial \Omega_t}|\nabla u(t)|^2$, then from \eqref{keyexpression} and the above comments, as long as $F_t$ is not a translation mapping, $|t|\ne 0$ and $|t|$ small, then $|\nabla u(t)|^2$ is strictly bigger than $|\nabla u(0)|^2$, and hence $f$ takes a strict local minimum at $t=0$. This exactly says the following: 
\begin{remark}
\label{localop}
Each shape $\Omega$ corresponds to a maximal dispersion of temperature (or maximal stress) defined by $\sup_{\partial \Omega}|\partial_\nu u_\Omega|=\sup_{\partial \Omega}|\nabla u_\Omega|$, where $u_\Omega$ is the torsion function associated to $\Omega$. Then among all shapes of fixed volume, ball locally has least maximal dispersion (or least maximal stress). 
\end{remark}
We mention that ball is not a global minimizer to the functional $\tau(\Omega):=\sup_{\partial \Omega}|\nabla u_\Omega|$ among smooth domains with fixed volume. For example, if $\Omega$ is a thin ellipse with the same area as the unit disk, then $\tau(\Omega)$ is close to $0$. The maximizer among domains with fixed volume for $\tau(\cdot)$ is still open, and some partial progress can be found in \cite{HS21} and references therein. 

Third, let us still consider volume preserving transformation. If one takes integration of the more general form \eqref{pointwisegrad} over $\Omega_t$, and using that 
\begin{align*}
    det\left(\nabla F_t(x)\right)=1+\div \eta(x) t+O(t^2),
\end{align*}from area formula one has
\begin{align*}
    \int_{\Omega_t} |\nabla u(t)|^2 (y) \, dy=\int_{B_1} \frac{|x|^2}{n^2}\, dx+t\int_{B_1} \left(\frac{|x|^2}{n^2}\div \eta+\frac{2}{n^2}(\eta \cdot x-x\cdot \nabla T\zeta)\right) \, dx+O(t^2).
\end{align*}
By \eqref{int0} and integration by parts, 
\begin{align*}
    \int_{B_1}  \left(\frac{|x|^2}{n^2}\div \eta+\frac{2}{n^2}\eta \cdot x\right)\, dx=\int_{\partial B_1}\frac{1}{n^2}\zeta \, d\sigma=0.
\end{align*}Also, since $x=\nabla (|x|^2/2)$ and $T\zeta$ is harmonic, 
\begin{align*}
    \int_{B_1} x\cdot \nabla T\zeta \,dx=\int_{\partial B_1} \frac{1}{2}\partial_\nu T\zeta\, d\sigma=0.
\end{align*}
Hence we derive that 
\begin{align*}
    \frac{d}{dt}\Big|_{t=0}\left(\int_{\Omega_t} |\nabla u(t)|^2 (y) \, dy\right)=0.
\end{align*}This says that fixing volume of domains, ball is a critical shape of the functional 
\begin{align*}
    T(\Omega):=\int_{\Omega}|\nabla u_\Omega|^2 \, dx=\int_\Omega u_\Omega \, dx,
\end{align*}where $u_\Omega$ is the torsion function on $\Omega$. This matches the seminal Saint-Venant inequality, which actually says that ball is minimizer.

To end this section, we mention that it would also be interesting to derive all higher order terms of $|\nabla u(t)|^2$ if the transformation map is the flow map induced by ODE, though it is not the purpose of our work here. Second, similar expansion for Robin boundary conditions can also be considered.

\section{Location of maximal gradient of torsion function and minimal curvature when domain is close to a disk}

In this section, we consider convex domains which are close to a ball in $\mathbb{R}^n$. Particularly, using polar coordinates, we consider 
\begin{align}
    \label{polardomain}
\Omega_t=\{(r, \sigma): r<1+t\zeta(\sigma)\},     
\end{align}
where $\sigma\in \mathbb{S}^{n-1}$ and $\zeta$ is a smooth function defined on $\mathbb{S}^{n-1}$. For each $x\in B_1$, we write $x=(r,\sigma)$, and consider the smooth transformation
\begin{align}
    \label{ht}
F_t(x)=(r+t\chi(r)\zeta(\sigma),\sigma),   
\end{align}where $\chi(r)$ is chosen to be a smooth increasing function such that
\begin{align*}
    \chi(r)=0\quad \mbox{if $r\le 1/4$}\quad \mbox{and}\quad \chi(r)=1 \quad \mbox{if $r\ge 3/4$}
\end{align*}
Hence $\Omega_t$ is exactly $F_t(B_1)$, for $|t|$ small. Then as a consequence of Lemma \ref{crucial}, we have the following theorem, which gives location of points of maximal gradient of torsion function, when the domain is close to a ball. 
\begin{theorem}
    \label{location}
Let $u(t)$ be the torsion function associated to $\Omega_t$ described by \eqref{polardomain}, 
\begin{align}
    \label{gradsquare}
E=E(\sigma,t):=|\nabla u(t)|^2(1+t\zeta (\sigma), \sigma) 
\end{align}and 
\begin{equation}
\label{mathcalF}
\mathcal{F}(\sigma)=\zeta(\sigma)-\frac{\partial (T\zeta)}{\partial {r}}(1, \sigma),
\end{equation}where $T\zeta$ is the harmonic extension of $\zeta$ into the unit ball $B_1$. If $\hat{\sigma}$ is a unique maximum point of $\mathcal{F}(\cdot): ~ U\subset \mathbb{S}^{n-1}\to \R$ which is non-degenerate, then there exist $t_0>0$ and a smooth function $\hat{\varsigma}: ~ (-t_{0}, t_{0}) \to \mathbb{S}^{n-1}$ with $\hat{\varsigma}(0)=\hat{\sigma}$ such that 
for $0<t<t_{0}$, $\hat{\varsigma}(t)$ is the unique (non-degenerate) global maximum point of $E=E(\cdot, t)$ (i.e., $|\nabla u(t)|$ on $\partial\Omega_{t}$) over $U$.
\end{theorem}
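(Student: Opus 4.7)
The plan is to convert the expansion from Lemma \ref{crucial} into a statement about a smooth family $G(\sigma,t)$ that extends continuously to $t=0$ with $G(\cdot,0)=\mathcal{F}$, apply the implicit function theorem to track the critical point near $\hat\sigma$, and then invoke compactness together with the uniqueness of $\hat\sigma$ to upgrade the resulting local maximum to the unique global maximum of $E(\cdot,t)$ on $U$.

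Starting from Lemma \ref{crucial}, the vector field \eqref{ht} restricts on $\partial B_1$ to $\eta(\sigma)=\zeta(\sigma)\sigma$, so $\eta\cdot\nu=\zeta$ and \eqref{keyexpression} gives
\begin{equation*}
E(\sigma,t)=\frac{1}{n^{2}}+\frac{2t}{n^{2}}\mathcal{F}(\sigma)+O(t^{2}).
\end{equation*}
The joint smoothness of $\tilde w(x,t)$ established in the proof of that lemma, combined with the smoothness of $F_t$, makes $E$ itself $C^{\infty}$ in $(\sigma,t)$ on $\overline{U}\times(-\delta,\delta)$ for some $\delta>0$. By Hadamard's lemma (division by $t$), the function
\begin{equation*}
G(\sigma,t):=\frac{n^{2}}{2t}\!\left(E(\sigma,t)-\frac{1}{n^{2}}\right)\quad(t\neq 0),\qquad G(\sigma,0):=\mathcal{F}(\sigma),
\end{equation*}
extends to an element of $C^{\infty}(\overline{U}\times(-\delta,\delta))$, and for $t>0$ the maximizers of $G(\cdot,t)$ and of $E(\cdot,t)$ on $U$ coincide.

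At $(\hat\sigma,0)$ the tangential gradient is $\nabla_{\sigma}G(\hat\sigma,0)=\nabla_{\sigma}\mathcal{F}(\hat\sigma)=0$ and the tangential Hessian $D^{2}_{\sigma}G(\hat\sigma,0)=D^{2}_{\sigma}\mathcal{F}(\hat\sigma)$ is negative definite, hence invertible. Working in local coordinates on $\mathbb{S}^{n-1}$ near $\hat\sigma$, the implicit function theorem applied to $\nabla_{\sigma}G(\sigma,t)=0$ produces $t_{0}\in(0,\delta)$ and a smooth curve $\hat\varsigma\colon(-t_{0},t_{0})\to\mathbb{S}^{n-1}$ with $\hat\varsigma(0)=\hat\sigma$ and $\nabla_{\sigma}G(\hat\varsigma(t),t)\equiv 0$. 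Continuity of $D^{2}_{\sigma}G$ preserves negative-definiteness on a full neighborhood of $(\hat\sigma,0)$, so after shrinking $t_{0}$ there is an open neighborhood $V\subset U$ of $\hat\sigma$ on which $G(\cdot,t)$ is strictly concave for every $|t|<t_{0}$; thus $\hat\varsigma(t)$ is the unique critical point, and hence the unique non-degenerate maximum, of $G(\cdot,t)$ on $V$.

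To globalize, use the uniqueness of $\hat\sigma$ as global maximum of $\mathcal{F}$ on $U$ together with the compactness of $\overline{U}\setminus V$:
\begin{equation*}
m:=\sup_{\overline{U}\setminus V}\mathcal{F}<\mathcal{F}(\hat\sigma).
\end{equation*}
Since $G(\cdot,t)\to\mathcal{F}$ uniformly on $\overline{U}$ as $t\to 0$, $t_{0}$ can be shrunk further so that $\sup_{\overline{U}\setminus V}G(\cdot,t)<G(\hat\varsigma(t),t)$ for $0<t<t_{0}$; combined with the uniqueness statement on $V$, this makes $\hat\varsigma(t)$ the unique non-degenerate global maximum of $E(\cdot,t)$ on $U$. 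The main obstacle is precisely this last step: the implicit function theorem is purely local, and ruling out spurious maximizers appearing away from $\hat\sigma$ as $t$ grows from $0$ relies crucially on (i) the uniqueness of $\hat\sigma$, which produces the gap $\mathcal{F}(\hat\sigma)-m>0$, and (ii) compactness of $\overline{U}\setminus V$, which makes that gap survive the $C^{0}$-small perturbation $G(\cdot,t)-\mathcal{F}$.
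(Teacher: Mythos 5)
Your argument is essentially the paper's argument. The function you call $G$ is the paper's $\tilde E$ up to the harmless constant $n^{2}/2$; both are obtained by dividing out the leading $t$ from the expansion in Lemma~\ref{crucial}, both invoke smoothness of the quotient (you name Hadamard's lemma, the paper simply asserts it), and both apply the implicit function theorem to $\nabla_{\sigma}G(\sigma,t)=0$ at the nondegenerate critical point $(\hat\sigma,0)$ to produce the smooth curve $\hat\varsigma(t)$. Where you go beyond the paper is in spelling out the globalization: the paper writes only ``then due to \eqref{fuzhu2}, for $t>0$ small, $\hat{\varsigma}(t)$ is the unique maximum point of $\tilde E(\cdot,t)$ over $U$,'' whereas you make this precise via the gap $\mathcal{F}(\hat\sigma)-\sup_{\overline{U}\setminus V}\mathcal{F}>0$, the uniform convergence $G(\cdot,t)\to\mathcal{F}$ on $\overline{U}$, and strict concavity of $G(\cdot,t)$ on a shrunk neighborhood $V$. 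This is the right way to close the argument; the only caveat, which is equally present (and unaddressed) in the paper's own statement, is that your strict inequality $\sup_{\overline{U}\setminus V}\mathcal{F}<\mathcal{F}(\hat\sigma)$ requires $\hat\sigma$ to be the unique maximizer of $\mathcal{F}$ over the \emph{closure} $\overline{U}$, not merely over the open set $U$ as literally stated in the hypotheses. In the intended applications ($U$ an open quadrant of $\mathbb{S}^{1}$ with $\zeta$ having coordinate symmetries) this is automatic, so the issue is one of phrasing rather than substance.
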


\begin{proof}
    According to Lemma \ref{crucial}, we have
\begin{align}
\label{expa}
    E(\sigma, t)=\frac{1}{n^2}+\frac{2}{n^2}\left(\zeta(\sigma)-\frac{\partial(T\zeta)}{\partial r}(1,\sigma)\right)t+O(t^2)
\end{align}
Define 
\begin{align}
\label{fuzhu2}
    \Tilde{E}(\sigma_,t):=\begin{cases}
        \frac{E(\sigma,t)-\frac{1}{n^2}}{t}\quad &t\ne 0\\
        \frac{2}{n^2}\mathcal{F}(\sigma)\quad &t=0.
    \end{cases}
\end{align}
Then $\Tilde{E}:\, \mathbb{S}^{n-1}\times (-1,1)\mapsto \mathbb{R}$ is a smooth function. `If $\hat{\sigma}$ is a critical point of $\mathcal{F}(\cdot)$, then 
\begin{align*}
    \nabla_{\mathbb{S}^{n-1}}\Tilde{E}(\sigma,0)=\frac{2}{n^2} \nabla_{\mathbb{S}^{n-1}}\mathcal{F}(\sigma)=0.
\end{align*}
If $\hat{\sigma}$ is further to be nondegenerate, then by implicit function theorem, in a neighborhood of $(\hat{\sigma}, 0)$, $\nabla_{\mathbb{S}^{n-1}}\Tilde{E}(\sigma, t)=0$ is uniquely solved by a smooth function $\sigma=\hat{\varsigma}(t)$. If $\hat{\sigma}$ is a unique maximum point of $\mathcal{F}(\cdot)$ over an open subset $U$ of $\mathbb{S}^{n-1}$, then due to \eqref{fuzhu2}, for $t>0$ small, $\hat{\varsigma}(t)$ is the unique maximum point of $\Tilde{E}(\cdot, t)$ over $U$, and hence for $t>0$ small, $\hat{\varsigma}(t)$ is the unique maximum point of $E(\cdot, t)$ over $U$.
\end{proof}

As application for planar domains, we may regard $\zeta$ as a function of the polar angle $\theta\in [0,2\pi)$. The next proposition gives location of minimal curvature of a planar domain close to a disk.

\begin{proposition}
\label{locationcurvature}
Let $\Omega_t=\{(r, \theta): r<1+t\zeta(\theta)\}$. Then the curvature $\kappa=\kappa(\theta, t)$ of $\partial\Omega_{t}$ is given by 
\begin{equation*}
	\kappa(\theta, t)=\frac{(1+t \zeta)^2 + 2(t \zeta')^2 - (1+t \zeta)(t \zeta'')}{[(1+t \zeta)^2 + (t \zeta')^2]^{3/2} }.	
\end{equation*}
If $\bar{\theta}\in\mathbb{S}^{1}$ is a unique  global maximum point of $\zeta+\zeta''$ over an open subset $U$ of $\mathbb{S}^{1}$ and it is non-degenerate, then there exists a constant $\bar{t}_{0}>0$ and a smooth function $\bar{\vartheta}: ~ (-\bar{t}_{0}, \bar{t}_{0}) \to \mathbb{S}^{1}$ with $\vartheta(0)=\bar{\theta}$ such that for $0<t<\bar{t}_{0}$, $\theta=\bar{\vartheta}(t)$ is the unique global minimum point (which is non-degenerate) of $\kappa(\cdot, t)$ over $U$.
\end{proposition}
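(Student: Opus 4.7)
The plan for Proposition \ref{locationcurvature} is to first derive the curvature formula directly, then pass to a leading-order expansion in $t$, and finish with an implicit function theorem argument parallel to the proof of Theorem \ref{location}.

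For the curvature formula itself, I would apply the standard expression for the curvature of a planar curve given in polar form $r=r(\theta)$, namely $\kappa = (r^2 + 2(r')^2 - r r'')/(r^2 + (r')^2)^{3/2}$, to the parametrization $r(\theta) = 1 + t\zeta(\theta)$. Since $r'=t\zeta'$ and $r''=t\zeta''$, the stated identity follows by direct substitution, with no convexity or size assumption needed beyond smoothness of $\zeta$.

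Next I would expand $\kappa(\theta,t)$ in powers of $t$. Taylor expanding the numerator and using $((1+t\zeta)^2 + (t\zeta')^2)^{3/2} = 1 + 3t\zeta + O(t^2)$ gives
\begin{equation*}
\kappa(\theta,t) = 1 - t\bigl(\zeta(\theta) + \zeta''(\theta)\bigr) + O(t^2),
\end{equation*}
with the $O(t^2)$ remainder smooth in $(\theta,t)$ on $\mathbb{S}^{1}\times(-\eps,\eps)$ for $\eps$ small (the denominator is bounded away from zero). Thus, to leading order in $t>0$, minimizing $\kappa(\cdot,t)$ amounts to maximizing $\zeta+\zeta''$, which explains why this quantity appears in the statement.

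The closing step mirrors the proof of Theorem \ref{location}. I would introduce
\begin{equation*}
\tilde{\kappa}(\theta,t) :=
\begin{cases}
(\kappa(\theta,t)-1)/t, & t \neq 0,\\
-\bigl(\zeta(\theta)+\zeta''(\theta)\bigr), & t=0,
\end{cases}
\end{equation*}
verify from the expansion that it is smooth across $t=0$, and apply the implicit function theorem to $\partial_\theta \tilde{\kappa}(\theta,t) = 0$ at the point $(\bar\theta,0)$. Non-degeneracy of $\bar\theta$ as a maximum of $\zeta+\zeta''$ translates to $\partial_\theta^{2}\tilde{\kappa}(\bar\theta,0) > 0$, so the theorem yields the desired smooth curve $\bar\vartheta(t)$ with $\bar\vartheta(0)=\bar\theta$; for $0<t<\bar t_{0}$ small, $\bar\vartheta(t)$ is the unique non-degenerate minimum of $\tilde{\kappa}(\cdot,t)$, hence of $\kappa(\cdot,t)$, over $U$. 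There is no substantive obstacle: the calculation is routine and the implicit function step is essentially a copy of the one in Theorem \ref{location}; the only care needed is the sign bookkeeping, since the minus sign in front of the $t$-coefficient turns maximization of $\zeta+\zeta''$ into minimization of $\kappa$.
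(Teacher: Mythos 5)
Your proposal is correct and follows essentially the same route as the paper: the standard polar curvature formula, the first-order expansion $\kappa = 1 - t(\zeta+\zeta'') + O(t^2)$, the auxiliary quotient $\tilde\kappa$ smooth across $t=0$, and the implicit function theorem applied to $\partial_\theta\tilde\kappa$ at $(\bar\theta,0)$. The sign bookkeeping you flag (non-degenerate maximum of $\zeta+\zeta''$ becoming non-degenerate minimum of $\kappa$) is exactly the point the paper handles the same way.
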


\begin{proof}
	The expression of the curvature $\kappa(\theta, t)$ is standard.  It is clear that $\kappa(\theta, t)$ is a smooth function in $(\theta, t)$, and by direct computation,
 \begin{align*}
     \kappa(\theta,t)=1-(\zeta+\zeta'')t+O(t^2).
 \end{align*}
	Denote 
	\begin{equation*}
		{K}(\theta, t) = 
		\begin{cases}
		\frac{\kappa(\theta, t) - 1}{t} &\text{ if } t \neq 0,
		\\
		-(\zeta+\zeta'')(\theta) &\text{ if } t \neq 0.	
		\end{cases}
	\end{equation*}
It is clear that ${K}: ~ \mathbb{S}^{1}\times (-1, 1) \rightarrow \R$ is a smooth function, and 
\begin{align*}
	\frac{\partial{K}}{\partial\theta}(\theta, 0)
	= - (\zeta+\zeta'')', \quad 
	\frac{\partial^2{K}}{\partial\theta^2}
	= - (\zeta+\zeta'')''.
\end{align*}
Suppose that
$(\zeta+\zeta'')'=0$ and  $(\zeta+\zeta'')''\neq0$ at $\theta=\bar{\theta}$. 
%
%
By applying implicity function theroem to $\partial{K}/\partial\theta$, 
in a neiborhood of $(\theta, t)=(\bar{\theta}, 0)$, the equation $\partial_{\theta}{K}(\theta, t) = 0$ is uniquely solved by a smooth function $\theta=\bar{\vartheta}(t)$ with $\bar{\vartheta}(0)=\bar{\theta}$. Moreover, if $\bar{\theta}\in \mathbb{S}^1$ is a unique global maximum point (which is non-degenerate) of $\zeta+\zeta''$ over $U$, then 
\begin{equation*}
{K}(\theta, t)>{K}(\bar{\vartheta}(t), t) \text{ for } \theta\in U \backslash\{\bar{\vartheta}(t)\}
\end{equation*}
for $|t|<\bar{t}_{0}$ for some $\bar{t}_{0}>0$. Therefore, $\theta=\bar{\vartheta}(t)$ is the unique global minimum point (which is non-degenerate) of $\kappa(\cdot, t)$ over $U$ for $0<t<\bar{t}_{0}$.
\end{proof}

\begin{remark}
\label{trigseries}
	If $\zeta(\theta) = \sum_{n>0}{c}_{n}\cos(n\theta)$,
	then $(\mathcal{T}\zeta)=\zeta(\theta) = \sum_{n>0}{c}_{n}r^{n}\cos(n\theta)$, and 
	\begin{align*}
		(\zeta+\zeta'')(\theta) =& \sum_{n>1}{c}_{n}(1-n^2)\cos(n\theta),
		\\
		\mathcal{F}(\theta) =& \sum_{n>1}{c}_{n}(1-n)\cos(n\theta).
	\end{align*}
Also, if $\Omega_t \subset \mathbb{R}^2$ given by \eqref{polardomain} is axially symmetric, then $\zeta$ must be of the form
\begin{align*}
    \zeta(\theta)=\sum_{k\ge 1} c_{2k}\cos(2k\theta)
\end{align*}
\end{remark}

\section{Counterexamples}
In this section we disprove Conjecture \ref{rc} and answer Question \ref{q2} mentioned in the introduction.

\begin{proposition}
\label{faraway}
Let $\Omega=\{x\in\R^2: ~ r<1+t\zeta(\theta)\}$ be a smooth domain with two coordinate axes of symmetry where $\zeta(\theta) = -4\cos(2\theta) + \cos(4\theta)$. Set $\Gamma^{++}=\partial\Omega\cap\{x_{1}\geq0, x_{2}\geq0\}$. Then for  $0<t\ll1$,
\begin{enumerate}[$(1)$]
	\item 
	the maximum of $|\nabla u(\cdot, t)|$ on $\Gamma^{++}$ is achieved at exactly one point $\bar{P}_{t}$ with corresponding $\theta=\bar{\vartheta}(t) = \bar{\theta} + O(t)$ where $\bar{\theta}=\arccos(\sqrt{2/3})$;
	\item 
	the minimum of the curvature $\kappa(\cdot, t)$ of $\Gamma^{++}$ is achieved at exactly one point $\hat{P}_{t}$ with corresponding $\theta=\hat{\vartheta}(t) = \hat{\theta} + O(t)$ where $\hat{\theta}=\arccos(\sqrt{3/5})$;
	\item 
	the minimum of the distance function $x\in\Gamma^{++}\mapsto |x|$ is achieved at exactly one point $\tilde{P}_{t}$ with corresponding $\theta=0$, which corresponds to end of shorter axis.
\end{enumerate}
\end{proposition}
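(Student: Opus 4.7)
The plan is to reduce each of the three optimization problems on the arc $\Gamma^{++}$, parameterized by $\theta \in [0, \pi/2]$, to a quadratic optimization in the variable $c := \cos(2\theta) \in [-1, 1]$, and then invoke Theorem \ref{location} and Proposition \ref{locationcurvature} on a neighborhood of the resulting interior maximizer.

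For part (1), I would apply Remark \ref{trigseries} to $\zeta(\theta) = -4\cos(2\theta) + \cos(4\theta)$, obtaining
\[
\mathcal{F}(\theta) = 4\cos(2\theta) - 3\cos(4\theta) = -6c^{2} + 4c + 3.
\]
This quadratic in $c$ has its unique maximum at $c = 1/3$, with value $11/3$, strictly inside $[-1,1]$. The corresponding $\theta \in [0,\pi/2]$ satisfies $2\cos^{2}\theta - 1 = 1/3$, i.e., $\bar\theta = \arccos(\sqrt{2/3})$. Comparing with the endpoint values $\mathcal{F}(0) = 1$ and $\mathcal{F}(\pi/2) = -7$, and noting that $\bar\theta$ is the only critical point of $\mathcal{F}$ in the open interval $(0, \pi/2)$, it is the strict global maximum on $[0, \pi/2]$. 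A direct computation of $\mathcal{F}''(\bar\theta)$, using $\sin^{2}(2\bar\theta) = 8/9$, yields a strictly negative value, so $\bar\theta$ is non-degenerate. I would then apply Theorem \ref{location} with $U$ a small neighborhood of $\bar\theta$ on which $\mathcal{F}$ has $\bar\theta$ as strict maximum, producing a smooth maximizer $\bar\vartheta(t) = \bar\theta + O(t)$ of $E(\cdot, t)$ over $U$. A uniform continuity argument, using that $\max_{[0,\pi/2]\setminus U}\mathcal{F} < 11/3$ together with the expansion \eqref{expa}, upgrades this to a global maximum over $\Gamma^{++}$ for $0 < t \ll 1$.

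For part (2), the same substitution applied to the formula in Remark \ref{trigseries},
\[
(\zeta + \zeta'')(\theta) = 12\cos(2\theta) - 15\cos(4\theta) = -30c^{2} + 12c + 15,
\]
yields a unique interior maximum at $c = 1/5$ with value $81/5$, dominating the endpoint values $-3$ and $-27$. Back-substituting via $2\cos^{2}\theta - 1 = 1/5$ gives $\hat\theta = \arccos(\sqrt{3/5})$, and non-degeneracy is checked as before. Proposition \ref{locationcurvature}, combined with the same global-to-local upgrade, then yields (2). For part (3), the distance function on $\partial\Omega_{t}$ in polar coordinates is simply $r(\theta) = 1 + t\zeta(\theta)$, so for $t > 0$ minimization reduces to minimizing $\zeta(\theta) = 2c^{2} - 4c - 1$. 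This quadratic in $c$ is strictly decreasing on $[-1, 1]$, so its minimum over $[0, \pi/2]$ is attained uniquely at $c = 1$, i.e., $\theta = 0$, for every $t > 0$.

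The only substantive point is the passage from the local statement of Theorem \ref{location} to a global statement on $[0, \pi/2]$: one must verify that the boundary critical points $\theta = 0, \pi/2$ (forced by axial symmetry) yield strictly smaller values of $\mathcal{F}$ and of $\zeta + \zeta''$ than the interior critical points. The quadratic-in-$c$ reduction trivializes this comparison, and so no serious obstacle arises.
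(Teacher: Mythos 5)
Your proposal is correct and follows essentially the same route as the paper: compute $\mathcal{F}$ and $\zeta+\zeta''$ via Remark \ref{trigseries}, locate their maximizers on $[0,\pi/2]$, and invoke Theorem \ref{location} and Proposition \ref{locationcurvature}. The $c=\cos(2\theta)$ substitution and the explicit local-to-global comparison are nice elaborations of details the paper's brief proof leaves implicit, but they do not change the underlying argument.
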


\begin{proof}
If 
$\zeta(\theta) = -4\cos(2\theta) + \cos(4\theta)$, then by Remark \ref{trigseries}, 
\begin{align*}
	(\zeta+\zeta'')(\theta) =&12\cos(2\theta)-15\cos(4\theta),
	\\
	\mathcal{F}(\theta) =&4\cos(2\theta)-3\cos(4\theta).
\end{align*}
Note that $\zeta$ is symmetric with respect to both $\theta=0$ and $\theta=\pi/2$ ($\Omega_{t}$ is symmetric with respect to both coordinate axes).
Restricted on $[0, \pi/2]$, $(\zeta+\zeta'')$ has a unique global maximum point $\bar{\theta}=\arccos(\sqrt{3/5})$ (it is non-degenerate),  $\mathcal{F}$ has a unique global maximum point $\hat{\theta}=\arccos(\sqrt{2/3})$  (it is non-degenerate), and $\zeta$ has a global minimum point at $\theta=0$. By Theorem \ref{location} and Proposition \ref{locationcurvature}, we conclude the results.
\end{proof}

\begin{remark}
 For the above example, since $t>0$ is small, the closeness between points on boundary can measured by polar angle $\theta$, and in the first quadrant, the angle between the unique point of maximal gradient of torsion function, and the unique contact point of largest inscribed circle, are roughly 35 degree, which we do not think they are "near" each other. Hence the common belief that points of maximal stress are near contact points of largest inscribed circle, may not be quite true or should be further clarified.
\end{remark}

\begin{remark}
    In fact, in the previous example, restricted in the first quadrant, $\zeta$ is increasing, while $|\nabla u|$ is not monotone. Hence the monotone condition of curvature in \cite{kawohl1987location} cannot be replaced by the monotone condition of distance to the origin in order for the monotonicity of $|\nabla u|$.
\end{remark}

\begin{proposition}
\label{monotonecounterexample}
    Let $\Omega=\{x\in\R^2: ~ r<1+t\zeta(\theta)\}$ be a smooth domain with two coordinate axes of symmetry where $\zeta(\theta) = 13\cos(2\theta) - \cos(4\theta)$ and $0<t\ll1$. 
	Then there exits $t_0>0$ such that when $0<t<t_0$, $|\nabla u|$ is strictly monotone along $\Gamma^{++}=\partial\Omega\cap\{x_{1}\geq0, x_{2}\geq0\}$, while the curvature of $\partial\Omega$ is not monotone along $\Gamma^{++}$.

\end{proposition}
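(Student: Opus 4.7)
The plan is to apply Lemma~\ref{crucial} and Proposition~\ref{locationcurvature} together with the Fourier identities in Remark~\ref{trigseries}. Substituting $c_{2}=13$ and $c_{4}=-1$ gives
\begin{equation*}
\mathcal{F}(\theta) = -13\cos(2\theta) + 3\cos(4\theta),
\qquad
(\zeta+\zeta'')(\theta) = -39\cos(2\theta) + 15\cos(4\theta),
\end{equation*}
and differentiating,
\begin{equation*}
\mathcal{F}'(\theta) = 2\sin(2\theta)\bigl(13-12\cos(2\theta)\bigr),
\qquad
(\zeta+\zeta'')'(\theta) = 6\sin(2\theta)\bigl(13-20\cos(2\theta)\bigr).
\end{equation*}
Since $13-12\cos(2\theta)\geq 1>0$ uniformly on $[0,\pi/2]$, the function $\mathcal{F}$ is strictly increasing there. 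In contrast, $13-20\cos(2\theta)$ changes sign at the interior point $\theta^{\ast}:=\tfrac{1}{2}\arccos(13/20)\in(0,\pi/4)$, so $\zeta+\zeta''$ fails to be monotone on $[0,\pi/2]$.

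The main task is to upgrade the leading-order monotonicity of $\mathcal{F}$ to strict monotonicity of $E(\theta,t):=|\nabla u(t)|^{2}$ on all of $[0,\pi/2]$ for $0<t\ll 1$. Writing Lemma~\ref{crucial} (with $n=2$) as $E(\theta,t)=\tfrac{1}{4}+\tfrac{t}{2}\mathcal{F}(\theta)+t^{2}R(\theta,t)$ with $R$ smooth, the symmetries of $\Omega_{t}$ about both coordinate axes force $E(\cdot,t)$ to be even about $\theta=0$ and about $\theta=\pi/2$, so $\partial_{\theta}E$ vanishes at both endpoints for every $t$. Together with $\pi$-periodicity this lets us factor
\begin{equation*}
\partial_{\theta}E(\theta,t) = \sin(2\theta)\,K(\theta,t),
\end{equation*}
with $K$ smooth on $[0,\pi/2]\times(-t_{0},t_{0})$, using that each Fourier mode $\sin(2n\theta)/\sin(2\theta)$ is a polynomial in $\cos(2\theta)$. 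Since $E(\theta,0)\equiv 1/4$ we have $K(\theta,0)=0$, while direct differentiation of the expansion gives $\partial_{t}K(\theta,0)=13-12\cos(2\theta)\geq 1$. Hence $K(\theta,t)=t\bigl(13-12\cos(2\theta)\bigr)+O(t^{2})$ uniformly on $[0,\pi/2]$, which is strictly positive for small enough $t>0$. This yields $\partial_{\theta}E>0$ on $(0,\pi/2)$ and the claimed strict monotonicity of $|\nabla u(t)|$ along $\Gamma^{++}$. The main obstacle is precisely this factoring step: without isolating $\sin(2\theta)$, the naive bound $\partial_{\theta}E\geq ct-Ct^{2}$ breaks down near the endpoints where $\mathcal{F}'$ vanishes, leaving room for the $O(t^{2})$ correction to ruin monotonicity there.

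For the non-monotonicity of $\kappa(\cdot,t)$ along $\Gamma^{++}$ no comparable refinement is needed. Pick $\theta_{1}\in(0,\theta^{\ast})$ and $\theta_{2}\in(\theta^{\ast},\pi/2)$; by the sign analysis above, $(\zeta+\zeta'')'(\theta_{1})<0$ and $(\zeta+\zeta'')'(\theta_{2})>0$. Proposition~\ref{locationcurvature} gives $\partial_{\theta}\kappa(\theta,t)=-t\,(\zeta+\zeta'')'(\theta)+O(t^{2})$ smoothly in $(\theta,t)$, so for $t>0$ sufficiently small the pointwise signs persist: $\partial_{\theta}\kappa(\theta_{1},t)>0$ while $\partial_{\theta}\kappa(\theta_{2},t)<0$. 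Therefore $\kappa(\cdot,t)$ is not monotone along $\Gamma^{++}$, completing the plan.
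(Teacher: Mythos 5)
Your proof is correct. The computations of $\mathcal{F}$, $\zeta+\zeta''$, and their derivatives all check out, as does the sign analysis of $13-12\cos(2\theta)$ and $13-20\cos(2\theta)$, and your argument for the non-monotonicity of the curvature matches the paper's (which merely states that $\zeta+\zeta''$ is non-monotone and leaves the persistence under $O(t^2)$ perturbation implicit).

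Where you genuinely diverge from the paper is in the upgrade from the leading-order statement $\mathcal{F}'>0$ on $(0,\pi/2)$ to uniform strict monotonicity of $E(\cdot,t)$ for small $t>0$ --- a step that is nontrivial precisely because $\mathcal{F}'$ degenerates at $\theta=0,\pi/2$, as you rightly flag. The paper handles the degeneracy by a three-part argument: it uses the symmetry relations $\partial_\theta E=\partial_{\theta t}E=0$ at $\theta=0,\pi/2$ plus the explicit values $\partial_{\theta\theta t}E(0,0)=2$ and $\partial_{\theta\theta t}E(\pi/2,0)=-50$ to control small half-neighborhoods of the endpoints, then compactness plus $\partial_{\theta t}E(\theta,0)>0$ on the interior, and finally integrates in $t$ from $\partial_\theta E(\theta,0)\equiv 0$. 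You instead factor $\partial_\theta E(\theta,t)=\sin(2\theta)\,K(\theta,t)$ with $K$ smooth, which removes the degenerate factor once and for all and produces the uniform lower bound $\partial_t K(\theta,0)=13-12\cos(2\theta)\ge 1$ on all of $[0,\pi/2]$. Your route is more economical and avoids the endpoint/interior case split; the one point you should tighten is the justification that $K$ is jointly smooth up to $\theta=0,\pi/2$ --- the ``$\sin(2n\theta)/\sin(2\theta)$ is a Chebyshev polynomial'' remark handles each Fourier mode but doesn't by itself control the sum, whereas the cleaner route is a Hadamard-type division lemma: a smooth function odd about $\theta=0$ and about $\theta=\pi/2$ (equivalently, odd and $\pi$-antiperiodic) divides smoothly by $\sin(2\theta)$. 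With that stated, your argument is complete and is a nice streamlining of the paper's.
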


	\begin{proof}		
		From Theorem \ref{location} and Remark \ref{trigseries},
		\begin{align*}
			E(\theta,t):=|\nabla u|^2(\theta, t)=\frac{1}{4}+\frac{1}{2}\mathcal{F}(\theta) t+O(t^2).
		\end{align*}
		and
		\begin{align*}
			(\zeta+\zeta'')(\theta) =&-39\cos(2\theta)+15\cos(4\theta),
			\\
			\mathcal{F}(\theta) =&-13\cos(2\theta)+3\cos(4\theta).
		\end{align*}
		By the symmetry of domain $\Omega$ (and hence $u$), we know $E$ is symmetric with respect to both $\theta=0$ and $\theta=\pi/2$, 
		\begin{align}\label{eq402a}
			\partial_{\theta}E(\theta, t) = 0, \quad \partial_{\theta t}E(\theta, t) = 0 \text{ at } \theta=0, \pi/2.			
	\end{align}
Note that 
	\begin{gather}
	\label{eq403a}
	\partial_{\theta t}E(\theta, t) = \sin(2\theta)(13-12\cos(2\theta)) > 0   \text{ for } \theta \in(0, \tfrac{\pi}{2}) \text{ and } t=0,
	\\ \label{eq403b}
	\partial_{\theta\theta t}E(0, 0) = 2, \quad \partial_{\theta\theta t}E(\tfrac{\pi}{2}, 0) = -50.
\end{gather}
From \eqref{eq403b}, there exists a small positive constant $\delta_{0}\in(0, 1/2)$ and $t_{1}>0$ such that when $|t|<t_{1}$,
\begin{align*}
	\partial_{\theta\theta t}E(\theta, t) > 0 \, \mbox{for}\,  |\theta|<\delta_{0} \text{ and } \partial_{\theta\theta t}E(\theta, t) < 0 \text{ for } |\theta-\tfrac{\pi}{2}|<\delta_{0}. 
\end{align*}
Combining this with \eqref{eq402a}, 
	\begin{align*}
	\partial_{\theta t}E(\theta, t) > 0   \text{ for } \theta \in(0, \delta_{0})\cup(\tfrac{\pi}{2}-\delta_{0}, \tfrac{\pi}{2}) \text{ and } |t|<t_{1}.
\end{align*}
By \eqref{eq403a}, there exists $t_{2}>0$ such that  
	\begin{align*}
	\partial_{\theta t}E(\theta, t) > 0   \text{ for } \theta \in [\delta_{0}, \tfrac{\pi}{2}-\delta_{0}] \text{ and } |t|<t_{2}.
\end{align*}
Therefore, setting $t_{0}=\min\{t_{1}, t_{2}\}$,
\begin{align*}
	\partial_{\theta t}E(\theta, t) > 0   \text{ for } \theta \in (0, \tfrac{\pi}{2}) \text{ and } |t|<t_{2}.
\end{align*}
Combining this with the fact $\partial_{\theta}E(\theta, 0)\equiv0$, we get 
\begin{align*}
	\partial_{\theta}E(\theta, t) > 0   \text{ for } \theta \in (0, \tfrac{\pi}{2}) \text{ and } 0<t<t_{0}.
\end{align*}
It then follows that $|\nabla u|(\theta,t)$ is strictly increasing with respect to $\theta\in[0, \tfrac{\pi}{2}]$ when $0<t<t_0$.

However, direct computation shows that $\zeta+\zeta''$ is not monotone for $\theta \in [0,\pi/2]$. This implies that the curvature of $\partial\Omega$ is not monotone along $\Gamma^{++}$ whenever $t$ is small enough.
	\end{proof}

\section{Location of maximal gradient of torsion function on rectangular domains}

\begin{theorem}
\label{rectangle}
    Let $\Omega$ be a rectangular domain. Then the location of maximal norm of gradient of torsion function occurs at the middle points of the longer sides. 
\end{theorem}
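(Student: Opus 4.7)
After a rigid motion and rescaling we may assume $\Omega = (-a,a)\times(-b,b)$ with $a\geq b$. The longer sides are then $\{y=\pm b\}$, of length $2a$, with midpoints $(0,\pm b)$. Since $u$ is even in both $x$ and $y$ and $|\nabla u|=|\partial_\nu u|$ on each (straight) side, the theorem reduces to two claims: (i) on each side, $|\nabla u|$ attains its maximum at the midpoint of that side; (ii) $|\nabla u|(0,b)\geq |\nabla u|(a,0)$, with equality iff $a=b$.

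For (i) I would use the standard Fourier series obtained by separation of variables. Writing $u=\tfrac{1}{2}(b^2-y^2)+w$ with $w$ harmonic, $w|_{y=\pm b}=0$, $w|_{x=\pm a}=-\tfrac{1}{2}(b^2-y^2)$, and expanding in the basis $\cos(\mu_k y)$ with $\mu_k=(2k+1)\pi/(2b)$ gives
\begin{equation*}
u(x,y)=\frac{b^2-y^2}{2}-\frac{16 b^2}{\pi^3}\sum_{k=0}^{\infty}\frac{(-1)^k}{(2k+1)^3}\frac{\cosh(\mu_k x)}{\cosh(\mu_k a)}\cos(\mu_k y),
\end{equation*}
whence
\begin{equation*}
|\nabla u|(x,b) = b - \frac{8 b}{\pi^2}\sum_{k=0}^{\infty}\frac{1}{(2k+1)^2}\frac{\cosh(\mu_k x)}{\cosh(\mu_k a)}.
\end{equation*}
Each ratio $\cosh(\mu_k x)/\cosh(\mu_k a)$ is even and strictly increasing in $|x|$ on $[0,a]$, so the positive correction is minimized at $x=0$, proving (i) on the top side. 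The symmetric decomposition through $\tfrac{1}{2}(a^2-x^2)$ handles the right side.

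For (ii), the two decompositions applied at the two midpoints yield the dual representations
\begin{equation*}
|\nabla u|(0,b) = \frac{8 a}{\pi^2}\sum_{k\geq 0}\frac{(-1)^k}{(2k+1)^2}\tanh\!\Bigl(\tfrac{(2k+1)\pi b}{2 a}\Bigr), \quad |\nabla u|(a,0) = \frac{8 b}{\pi^2}\sum_{k\geq 0}\frac{(-1)^k}{(2k+1)^2}\tanh\!\Bigl(\tfrac{(2k+1)\pi a}{2 b}\Bigr).
\end{equation*}
Setting $r=a/b\geq 1$ and $\alpha_k=(2k+1)\pi/2$, claim (ii) reduces to
\begin{equation*}
\Psi(r) := \sum_{k\geq 0}\frac{(-1)^k}{(2k+1)^2}\bigl[r\tanh(\alpha_k/r) - \tanh(\alpha_k r)\bigr] \geq 0 \qquad (r\geq 1),
\end{equation*}
with $\Psi(1)=0$ termwise, so it suffices to show $\Psi'(r)>0$ for $r>1$.

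The main obstacle is establishing this monotonicity. Term-by-term differentiation gives an alternating series whose $k$-th summand is $(2k+1)^{-2}\bigl[\tanh(\alpha_k/r) - (\alpha_k/r)\mathrm{sech}^2(\alpha_k/r) - \alpha_k \mathrm{sech}^2(\alpha_k r)\bigr]$, and the alternating signs preclude any naive termwise bound. I would proceed by isolating the $k=0$ contribution, which is strictly positive on $[1,\infty)$ via an elementary calculus argument (using $\mathrm{sech}^2(\pi r/2)\leq 4e^{-\pi r}$ and the concavity of $\tanh$ on $[0,\infty)$), and by pairing consecutive $k\geq 1$ terms, exploiting the exponential decay $\mathrm{sech}^2(\alpha_k r)=O(e^{-(2k+1)\pi r})$ together with the rapid convergence of $\tanh(\alpha_k/r)$ to $1$. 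Should the direct series estimate prove too delicate, a natural backup is a shape-derivative argument: flow the square $(-b,b)^2$ into the rectangle along $\Omega_s=(-(b+s),b+s)\times(-b,b)$ and verify, via Hadamard's formula combined with Payne's subharmonicity of $|\nabla u|^2+u$, that $\tfrac{d}{ds}[|\nabla u_{\Omega_s}|^2(0,b) - |\nabla u_{\Omega_s}|^2(b+s,0)]\geq 0$ for $s\geq 0$.
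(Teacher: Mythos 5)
Your part (i) is correct and takes a genuinely different route from the paper: you use the explicit Fourier series for $u$ and the monotonicity of $\cosh(\mu_k x)/\cosh(\mu_k a)$ in $|x|$, whereas the paper avoids explicit formulas and instead applies the Hopf lemma to the harmonic function $u_x$ (resp.\ $u_y$) on the sides to get $u_{xy}>0$ on $(0,L)\times\{l\}$ and $\{L\}\times(0,l)$, which gives the same side-by-side monotonicity. Both work; yours is more computational, the paper's is softer and generalizes more easily (the paper notes it carries over verbatim to rectangular boxes in $\R^n$).

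Part (ii), however, is not actually proved in your proposal. You correctly reduce the midpoint comparison to the inequality $\Psi(r)\ge0$ for $r=a/b\ge1$, observe $\Psi(1)=0$, and state that it suffices to show $\Psi'(r)>0$ for $r>1$ — but then you only sketch how one \emph{might} control the alternating series $\Psi'(r)$ and acknowledge that the naive termwise bound fails. As you note, the signs alternate, the brackets tend to $1$ for large $k$ at fixed $r$ but behave like $\tfrac{2}{3}(\alpha_k/r)^3$ for fixed $k$ and large $r$, so the magnitudes are not even eventually monotone in $k$ uniformly in $r$; the pairing argument you gesture at has not been carried out and is not obviously robust. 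The ``backup'' shape-derivative route is likewise only a plan: you would need to justify Hadamard's formula for the pointwise quantity $|\nabla u_{\Omega_s}|^2$ at a boundary point (not an energy functional) and actually establish the sign of the derivative, neither of which is done. So the comparison $|\nabla u|(0,b)\ge|\nabla u|(a,0)$ is left open.

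The paper closes this gap with a short, complete moving-plane argument across the diagonal line $T_\lambda=\{y=x-\lambda\}$: for $L-l\le\lambda<L+l$ the reflected region stays inside $\Omega$, so $w^\lambda(x,y)=u(x,y)-u(y+\lambda,\,x-\lambda)<0$ below $T_\lambda$; taking $\lambda=L-l$, $w^{L-l}$ is harmonic, vanishes on the right side $\{x=L\}$, and Hopf's lemma gives $u_x(L,0)-u_y(L-l,l)>0$, i.e.\ $-u_x(L,0)<-u_y(L-l,l)$. Combined with the side monotonicity from part (i), $-u_y(L-l,l)<-u_y(0,l)$, this yields $|\nabla u|(L,0)<|\nabla u|(0,l)$ whenever $L>l$. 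You should replace your unfinished series/shape-derivative sketch for (ii) with an argument of this kind (or else carry the $\Psi'(r)>0$ estimate through to completion).
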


	\begin{proof}
		
		Let $\Omega=(-L,L)\times (-l,l)$, with $L\ge l$, and let $u$ be the torsion function on $\Omega$. It is obvious that $u$ is symmetry with respect to both coordinates axes. Moreover, $u$ satisfies the monotone properties in half domain,
		\begin{align*}
		u_{x} < 0 \text{ in } \Omega\cap\{x>0\} \text{ and } u_{y} < 0 \text{ in } \Omega\cap\{y>0\},
		\end{align*}
	where it can be obtained by applying maximum principle to directional derivative direction, see also the well-known result of moving plane method \cite{GNN}.
From the boundary condition of $u$, we know that $u_{x}=0$ on $(-L, L)\times\{l\}$ and $u_y=0$ on $\{L\} \times (-l,l)$. Applying the Hopf lemma to the harmonic function $u_{x}$ and $u_{y}$, we get that 
\begin{align*}
	u_{xy} > 0 \text{ on both } (0, L)\times\{l\} \text{ and } \{L\}\times(0, l).
\end{align*}
It then follows that on $[0, L]\times\{l\}$,  $|\partial_{\nu}u|=|u_{y}|=-u_{y}$ is strictly decreasing with respect to variable $x$, while on $\{L\}\times [0,l]$,  $|\partial_{\nu}u|=|u_{x}|=-u_{x}$ is strictly decreasing with respect to variable $y$. 
This, in turn, implies that $|\nabla u|$ achieves its maximum in the middle point of side.

It remains to determine the exactly location of global maximum of $|\nabla u|$, i.e., we need to compare $|\nabla u|(0,l)$ and $|\nabla u|(L,0)$. If $L=l$, then $\Omega$ and $u$ are symmetric w.r.t. the line $\{y=x\}$, and hence 
\begin{align*}
|\nabla u|(0,l)=|\nabla u|(L,0) \text{ whenever } L=l.	
\end{align*} 
Now we consider the case $L>l$ only. 
To do this, we let $T_{\lambda}=\{y=x-\lambda\}$ be the line passing through the point $(\lambda, 0)$ with slope 1, and let ${D}_{\lambda}$ be the region in $\Omega$ below the line $T_{\lambda}$. Then the reflection of $D_\lambda$ w.r.t. $T_{\lambda}$ is also contained in $\Omega$ for $L-l\leq\lambda< L+l$. Applying the well-known moving plane methods \cite{GNN, BN91}, one can deduce that 
\begin{align*}
w^{\lambda}(x, y) = u(x, y) - u(y+\lambda, x-\lambda) < 0 \text{ for } (x, y) \in {D}_{\lambda} 
\end{align*}
for every $L-l \leq \lambda< L+l$. Recalling $w^{L-l}$ is harmonic and satisfies 
$w^{L-l}(x, y)=0$ for $x=L$ and $|y|<l$. 
The Hopf lemma implies $\partial_{x} w^{L-l}(L, y)>0$, and hence
\begin{align*}
	\partial_{x}w^{L-l}(L, y) = u_{x}(L, y) - u_{y}(y+L-l, l) > 0 \text{ for } |y|<l. 
\end{align*}
In particular, $-u_{x}(L, 0) < - u_{y}(L-l, l)$. Combining this with $- u_{y}(L-l, l) < - u_{y}(0, l)$, we conclude
\begin{align*}
	|\nabla u|(0,l) > |\nabla u|(L,0) \text{ whenever } L>l.	
\end{align*}
Therefore, the proof is finished. 
\end{proof}

\begin{remark}
	The argument also works in any dimension $n$. Hence we can conclude that the  maximal norm of gradient of torsion function on rectangle in $\R^{n}$ is located only at the center of the faces of largest $(n-1)$-volume.
\end{remark}

\section{Further questions}
In this section, we mention some further questions.

\begin{question}
\label{q4}
    Let $\Omega$ be an axially symmetric convex planar domain centered at the origin and the lengths of the two axes are different. If $|\nabla u|(p)=\max_{\partial \Omega}|\nabla u|$, then is it true that $p$ can never be on the long axis? Is it true that $p$ can never be the point of maximal distance to the origin?
\end{question}

If the answer to Question \ref{q4} is positive, then a natural further question is as follows:
\begin{question}
\label{q5}
    Let $\Omega$ be an axially symmetric convex planar domain centered at the origin which is not a disk.  Let $p$ be the point at which $|\nabla u|$ takes its maximum value over $\partial \Omega$. Restricted in the first quadrant, if $\hat{x}$ is the unique point of maximal distance to the origin and $\check{x}$ is the unique point of minimal distance to the origin. Does the following inequality always hold?
    \begin{align*}
        |p-\check{x}|\le |p-\hat{x}|?
    \end{align*}
   Does the strict inequality hold?  
\end{question}
If the answer to Question \ref{q5} is positive, then we can say that for an axially symmetric planar convex domain, even if the point of maximal stress can be far away from contact points of largest inscribed circle, it is at least closer to contact points of largest inscribe circle than to the points of maximal distance. 

The following question is similar:

\begin{question}
\label{q6}
    Let $\Omega$ be an axially symmetric convex planar domain centered at the origin which is not a disk.  Let $p$ be the point at which $|\nabla u|$ takes its maximum value over $\partial \Omega$. Restricted in the first quadrant, if $\bar{x}$ is the unique point of maximal curvature and $\tilde{x}$ is the unique point of minimal curvature. Does the following inequality always hold?
    \begin{align*}
        |p-\tilde{x}|\le |p-\bar{x}|?
    \end{align*}
   Does the strict inequality hold?  
\end{question}


Next, we let $\Omega$ be an axially symmetric planar convex domain, $\rm{diam}(\Omega)$ be the diameter of $\Omega$, which means the largest distance between two points on $\partial \Omega$. We let $\mathcal{P}_\Omega$ be the set of points of maximal stress, and $\mathcal{C}_\Omega$  be the set of contact points of largest inscribed circle in $\Omega$. Let $d_\Omega$ be the  distance between $\mathcal{P}_\Omega$ and $\mathcal{C}_\Omega$, and we define
\begin{align}
\label{measure}
R_\Omega: =\frac{d_\Omega}{\rm{diam}(\Omega)}.
\end{align}
The quantity \eqref{measure} thus measures how close between the points of maximal stress and contact points of largest inscribed circle. It would be then very interesting to answer the following question.
\begin{question}
    \label{q7}
Among all axially symmetric planar convex domains $\Omega$, can we obtain a sharp upper bound of $R_\Omega$? What if we remove the axially symmetry condition? 
\end{question}

One can also ask a similar question in terms of curvature. 

Last, the following question is also open to us.
\begin{question}
    Let $\Omega$ be an axially symmetric planar convex domain and $u$ be the torsion function. What is the necessary and sufficient geometrical condition for $\Omega$ such that $|\nabla u|$ is an increasing function along the boundary in the first quadrant.
\end{question}

\section{acknowledgement} 
We would like to thank Professor Guido Sweers for very helpful comments.

\end{document}